\newtheorem{precor}{{\bf Corollary}}
\newenvironment{cor}{\begin{precor}{\hspace{-0.5
               em}{\bf.\ }}}{\end{precor}}
\newtheorem{precon}{{\bf Conjecture}}
\newtheorem{prealphcon}{{\bf Conjecture}}
\newtheorem{predefin}{{\bf Definition}}
\newtheorem{preexm}{{\bf Example}}
\newtheorem{preappl}{{\bf Application}}
\newtheorem{prelem}{{\bf Lemma}}
\newtheorem{preproof}{{\bf Proof.\ }}
\newenvironment{proof}[1]{\begin{preproof}{\rm
               #1}\hfill{$\blacksquare$}}{\end{preproof}}
\newtheorem{pretheorem}{{\bf Theorem}}
\newenvironment{theorem}{\begin{pretheorem}{\hspace{-0.5
               em}{\bf.\ }}}{\end{pretheorem}}
\newtheorem{prealphtheorem}{{\bf Theorem}}
\newtheorem{prealphlem}{{\bf Lemma}}
\newtheorem{prepro}{{\bf Proposition}}
\newtheorem{preprb}{{\bf Problem}}
\newtheorem{prerem}{{\bf Remark}}
\newtheorem{preapp}{{\bf Application}}
\newtheorem{prequ}{{\bf Question}}
\def\conct[#1,#2]{\mbox {${#1} \leftrightarrow {#2}$}}
\def\dconct[#1,#2]{\mbox {${#1} \rightarrow {#2}$}}
\def\deg[#1,#2]{\mbox {$d_{_{#1}}(#2)$}}
\def\mindeg[#1]{\mbox {$\delta_{_{#1}}$}}
\def\maxdeg[#1]{\mbox {$\Delta_{_{#1}}$}}
\def\outdeg[#1,#2]{\mbox {$d_{_{#1}}^{^+}(#2)$}}
\def\minoutdeg[#1]{\mbox {$\delta_{_{#1}}^{^+}$}}
\def\maxoutdeg[#1]{\mbox {$\Delta_{_{#1}}^{^+}$}}
\def\indeg[#1,#2]{\mbox {$d_{_{#1}}^{^-}(#2)$}}
\def\minindeg[#1]{\mbox {$\delta_{_{#1}}^{^-}$}}
\def\maxindeg[#1]{\mbox {$\Delta_{_{#1}}^{^-}$}}
\def\dre[#1,#2,#3]{\mbox {${\cal E}^{^{#3}}(#1,#2)$}}
\def\var[#1,#2]{\mbox {${\rm Var}_{_{#1}}(#2)$}}
\def\ls[#1]{\mbox {$\xi^{^{#1}}$}}
\def\hom[#1,#2]{\mbox {${\rm Hom}({#1},{#2})$}}
\def\onvhom[#1,#2]{\mbox {${\rm Hom^{v}}(#1,#2)$}}
\def\onehom[#1,#2]{\mbox {${\rm Hom^{e}}(#1,#2)$}}
\def\core[#1]{\mbox {$#1^{^{\bullet}}$}}
\def\cay[#1,#2]{\mbox {${\rm Cay}({#1},{#2})$}}
\def\sch[#1,#2,#3]{\mbox {${\rm Sch}({#1},{#2},{#3})$}}
\def\cays[#1,#2]{\mbox {${\rm Cay_{s}}({#1},{#2})$}}
\def\dirc[#1]{\mbox {$\stackrel{\rightarrow}{C}_{_{#1}}$}}
\def\cycl[#1]{\mbox {${\bf Z}_{_{#1}}$}}
\begin{document}
%\setcounter{page}{183}
%{\footnotesize AAA {\bf ?} (200?) ?--?}\\
%\maketitle

\begin{center} 
{\Large \bf A note on fall colorings of Kneser graphs}\\
\vspace{0.3 cm}
{\bf Saeed Shaebani}\\
{\it School of Mathematics and Computer Science}\\
{\it Damghan University}\\
{\it P.O. Box {\rm 36716-41167}, Damghan, Iran}\\
{\tt shaebani@du.ac.ir}\\ \ \\
\end{center}
\begin{abstract}
\noindent A fall coloring of a graph $G$ is a proper 
coloring of $G$ with $k$ colors such that each vertex sees all $k$
colors on its closed neighborhood. In this short note, we characterize
all fall colorings of Kneser graphs of type $KG(n,2)$.
\\

\noindent {\bf Keywords:}\ {Kneser graph, b-coloring, fall coloring.}\\

\noindent {\bf Mathematics Subject Classification: 05C15}
\end{abstract}
%%%%%%%%%%%%%%%%%%%%%%%%%%%%%%%%%%%%%%%%%%%%%%%%%%%%%%%%%%%%%%%%%%%%%%%
%%%%%%%%%%%%%%%%%%%%%%%%%%%%%%%%%%%%%%%%%%%%%%%%%%%%%%%%%%%%%%%%%%%%%%%%
\section{Introduction}

In this note, simple graphs whose vertex sets are
nonempty and finite are considered. Also, for each positive integer $k$, the
symbol $[k]$ means $\{i|\ i\in \mathbb{N},\ 1\leq
i\leq k \}$.

Let $G=(V(G),E(G))$ be a graph. A
{\it coloring} of $G$ is a function $f:V(G)\rightarrow C$ such
that for each $c$ in $C$, the set $f^{-1}(c)$ is independent; in
this case, we consider each $c$ in $C$ as a {\it color} and call
$f^{-1}(c)$ a {\it color class} of $f$. The {\it chromatic number} 
of $G$, denoted by $\chi(G)$, is the minimum 
cardinality of a set $C$ that a coloring $f:V(G)\rightarrow C$ exists.

Let $G$ be a graph and $f:V(G)\rightarrow C$ be a coloring of $G$. 
The vertex $v$ of $G$ is called a {\it b-dominating} vertex with
respect to $f$ if $f(N[v])=C$, i.e., the vertex $v$ sees all 
colors on its closed neighborhood. The coloring $f$
is said to be a {\it fall coloring} of $G$ if all vertices of $G$ are
b-dominating \cite{dun}. We mean by ${\rm Fall}(G)$ the set of all
natural numbers $k$ for which $G$ admits a fall coloring $f:V(G)\rightarrow C$
with $|C|=k$. We may have ${\rm Fall}(G)=\emptyset$; for
example, ${\rm Fall}(C_{5})=\emptyset$, where $C_{5}$ is 
the cycle with five vertices. A graph $G$ is called {\it $f$-continuous}
if either ${\rm Fall}(G)$ is empty or ${\rm Fall}(G)\neq\emptyset$ and 
${\rm Fall}(G)=\{ k\in \mathbb{N} |\  \min({\rm Fall}(G)) \leq k \leq \max({\rm Fall}(G))\}$. 
Not all graphs are $f$-continuous;
for example, the three dimensional cube $Q_{3}$ satisfies ${\rm Fall}(Q_{3})=\{ 2,4\}$.

Let $G$ be a graph. A {\it b-coloring} of $G$ is a coloring $f:V(G)\rightarrow C$ 
such that each color class contains at least one b-dominating vertex \cite{irv}.
The set of all natural numbers $k$ that $G$ has a b-coloring $f:V(G)\rightarrow C$
with $|C|=k$, is denoted by ${\rm B}(G)$. Always $\chi(G) \in {\rm B}(G)$.
The maximum of the set ${\rm B}(G)$, say $\chi _{b} (G)$, is called the {\it b-chromatic number} of $G$.
A {\it b-continuous} graph stands for a graph $G$ with ${\rm B}(G)=\{ k\in \mathbb{N} |\ 
 \chi (G) \leq k \leq \chi _{b} (G)\}$. The three dimensional cube $Q_{3}$ is not b-continuous;
since ${\rm B}(Q_{3})=\{ 2,4\}$.

\section{Fall colorings of Kneser graphs}

Suppose that $n\geq m$. Hereafter, ${[n] \choose m}$ denotes
the set of all $m$-subsets of $[n]$. The {\it Kneser graph}
$KG(n,m)$ has the vertex set ${[n] \choose m}$, in which $A \sim
B$ iff $A \cap B = \emptyset$. In \cite{jav}, Javadi and Omoomi determined the
b-chromatic number of all Kneser graphs of type $KG(n,2)$. Also, they proved that 
$KG(n,2)$ is b-continuous whenever $n\geq 17$. In this short note, we study 
fall colorings of Kneser graphs of type $KG(n,2)$. We show that if $n\geq 2$, then
$|{\rm Fall}(KG(n,2))| \leq 1$; and as a corollary, these graphs are $f$-continuous.
The procedure uses some facts mentioned in \cite{jav}; nevertheless, it is self-contained.

\begin{theorem}{ For each natural number
$n\geq2$, we have
\\
\\
${\rm Fall}(KG(n,2))=\left\{\begin{array}{ll}
  \{1\} & n=2\ {\rm or}\ 3 \\
  \{2\} & n=4\\
  \{\frac{n(n-1)}{6}\}& n\geq5,\ n=1\ {\rm or}\ 3\ ({\rm mod}\ 6) \\
  \{\frac{(n-1)(n-2)}{6}+1\} & n\geq5,\ n=2\ {\rm or}\ 4\ ({\rm mod}\ 6) \\
  \emptyset & n\geq5,\ n=0\ {\rm or}\ 5\ ({\rm mod}\ 6)
\end{array}\right.$
}
\end{theorem}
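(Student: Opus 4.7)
The plan is to reduce the determination of ${\rm Fall}(KG(n,2))$ to the existence problem for Steiner triple systems. The starting observation is that $KG(n,2)$ is the complement of the line graph of $K_{n}$, so an independent set in $KG(n,2)$ is exactly an intersecting family of $2$-subsets of $[n]$. A classical and elementary argument shows that such a family is either a \emph{sub-star} (a set of edges of $K_{n}$ all sharing a common vertex) or a full triangle $\{\{a,b\},\{a,c\},\{b,c\}\}$. I would first dispose of the small cases $n=2,3,4$ by direct inspection of the corresponding graphs ($KG(2,2)$ and $KG(3,2)$ have no edges, and $KG(4,2)$ is a perfect matching on six vertices).

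For $n\geq 5$, the key structural step is to prove that every sub-star color class in a fall coloring must in fact be a \emph{full} star. Given a sub-star class $C$ centered at $w$ and any $b\in[n]\setminus\{w\}$, I look at the vertex $\{w,b\}$ of $KG(n,2)$: its closed neighborhood is $\{w,b\}$ together with all $2$-subsets of $[n]$ disjoint from $\{w,b\}$. Since every edge of $C$ contains $w$, no element of $C$ lies in the disjoint part, so the color of $C$ can appear in $N[\{w,b\}]$ only via $\{w,b\}$ itself. The fall condition therefore forces $\{w,b\}\in C$ for every $b\neq w$, making $C$ the full star at $w$. A short additional argument shows there can be at most one such star color class, since two full stars at distinct vertices $w_{1}\neq w_{2}$ would both be forced to contain the single edge $\{w_{1},w_{2}\}$. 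I expect this structural step to be the main obstacle; once it is in hand, the remainder is essentially design-theoretic bookkeeping.

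With these restrictions, every fall coloring of $KG(n,2)$ for $n\geq 5$ falls into one of two shapes: either \emph{(A)} all color classes are triangles, corresponding to a Steiner triple system $STS(n)$ with $\frac{n(n-1)}{6}$ blocks; or \emph{(B)} there is one full star at some vertex $w$ together with a partition of the remaining edges of $K_{n}$ (which form $K_{n-1}$ on $[n]\setminus\{w\}$) into triangles, i.e., an $STS(n-1)$, for a total of $\frac{(n-1)(n-2)}{6}+1$ colors. Conversely, I would verify that these constructions really do give fall colorings by a routine counting argument: using that each vertex of a $v$-point Steiner triple system lies in exactly $(v-1)/2$ blocks, one checks for every vertex $\{a,b\}$ of $KG(n,2)$ that each triangle color appears on some edge of the induced $K_{n-2}$ on $[n]\setminus\{a,b\}$, and that the at most one color still missing from that subgraph is precisely the color of $\{a,b\}$ itself.

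The proof concludes by invoking the classical existence theorem that $STS(v)$ exists iff $v\equiv 1,3\pmod{6}$: Case~(A) applies iff $n\equiv 1,3\pmod{6}$, Case~(B) applies iff $n-1\equiv 1,3\pmod{6}$, i.e.\ iff $n\equiv 2,4\pmod{6}$, and for $n\equiv 0,5\pmod{6}$ neither construction is possible, so ${\rm Fall}(KG(n,2))=\emptyset$. The counting verification in Case~(B) is slightly more delicate than in Case~(A) because the star center $w$ behaves asymmetrically from the remaining vertices, but the arithmetic works out cleanly and matches the formulas stated in the theorem.
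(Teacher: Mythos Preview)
Your proposal is correct and follows essentially the same route as the paper: classify the color classes of a fall coloring of $KG(n,2)$ as triangles or stars, show that any star class must be a full star and that at most one star class can occur, and then reduce both directions to the existence of Steiner triple systems $STS(n)$ or $STS(n-1)$. The only cosmetic differences are that you invoke the classification of intersecting families of $2$-sets as a known lemma while the paper rederives it inline, and you spell out the converse verification more explicitly than the paper does.
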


\begin{proof}{
Obviously, ${\rm Fall}(KG(2,2))={\rm Fall}(KG(3,2))=\{1\}$. Also,
since the edge set of $KG(4,2)$ is a matching, therefore, ${\rm Fall}(KG(4,2))=\{2\}$.
 Now, let us suppose that $n\geq5$. We note that by considering the complete graph 
 $K_{n}$ with vertex set $[n]$, the Kneser graph $KG(n,2)$ is exactly the complement graph
of the line graph of $K_{n}$. Therefore, one can think of
$KG(n,2)$ as the graph whose vertex set is $E(K_{n})$ in which
two elements of $E(K_{n})$ are adjacent in $KG(n,2)$ iff they
have not any common vertices in $K_{n}$. Now, suppose that
$f$ is a fall coloring of $KG(n,2)$ and $S$
is an arbitrary color class of $f$. We have the following two cases:

\noindent Case I : The case that $\bigcap_{A\in S}A =\emptyset$. In this case, there
exist three pairwise distinct elements $a, b, c \in [n]$ such that $ S =\{ \{a,b\},\{b,c\},\{c,a\} \}$. So,
$S$ is the set of edges of a triangle in $K_{n}$. We call such color classes {\it triangular}.

\noindent Case II : The case that $\bigcap_{A\in S}A\neq\emptyset$. Let
$i\in \bigcap_{A\in S}A$. Each vertex $B$ of
$KG(n,2)$ which contains $i$ is an element of $S$; otherwise,
$B\notin S$ and $B$ has not any neighbors in $S$, contradicting
the fact that $f$ is a fall coloring. Also, since
$n\geq 5$, for each $j\in [n]\setminus \{i\}$, there exists a
vertex $C$ of $KG(n,2)$ with $i\in C$ and $j\notin C$ and $C\in
S$; therefore, $\bigcap_{A\in S}A=\{i\}$ and $S=\{A|\ A\in {[n]
\choose 2},\  i\in A\}$. Accordingly, $S$ is the set of 
all edges in $K_{n}$ that are incident with $i$. We call such color classes {\it starlike}. It
is obvious that the number of starlike color classes of $f$ is at most one.

We conclude that the set of color classes of a fall coloring of $KG(n,2)$ is either a partition of
$E(K_{n})$ into one starlike and some triangles or a
partition of $E(K_{n})$ into triangles. Conversely, every such a partition of $E(K_{n})$ is the 
set of color classes of a fall coloring of $KG(n,2)$. On the other hand, it is well-known that the
edge set of a complete graph can be partitioned into triangles iff the number of its vertices is
congruent to $1$ or  $3$ modulo $6$.
Therefore, if ${\rm Fall}(KG(n,2))\neq\emptyset$, then $n=1\
{\rm or}\ 2\ {\rm or}\ 3\ {\rm or}\ 4\ ({\rm mod}\ 6)$. So, if 
$n=0\ {\rm or}\ 5\ ({\rm mod}\ 6)$, then ${\rm Fall}(KG(n,2))=\emptyset$.
Also, for $n=1\ {\rm or}\ 3\ ({\rm mod}\ 6)$, we have 
${\rm Fall}(KG(n,2))=\{\frac{n(n-1)}{6}\}$.
Finally, if $n=2\ {\rm or}\ 4\ ({\rm mod}\ 6)$,
we can make a partition of $E(K_{n})$ into one
starlike and some triangles; and therefore, ${\rm Fall}(KG(n,2))=\{\frac{(n-1)(n-2)}{6}+1\}$.
}
\end{proof}

The following corollary is an immediate consequence of the theorem.
\begin{cor}{ For $n\geq 2$, the Kneser graph $KG(n,2)$ is $f$-continuous.
}
\end{cor}

\end{document}